\documentclass[11pt]{article}
\usepackage{verbatim}
\usepackage{amsmath}
\usepackage{amssymb}
\usepackage{amsfonts}
\usepackage{amsthm}
\usepackage{cite}
\usepackage{graphicx} %,obrázky
\usepackage{pgf,tikz}
\usepackage{tikz-cd} % diagramy
\usepackage{pgfplots}
\usepgfplotslibrary{fillbetween}
\pgfplotsset{compat = newest}
\usepackage{mathrsfs}
\usetikzlibrary{arrows,intersections}
\usepackage{color}
\usepackage[legalpaper, margin=1in]{geometry}
\usepackage{enumitem}
\usepackage{latexsym}
\usepackage{caption}

\newtheorem{theorem}{Theorem}[section]
\newtheorem*{theorem*}{Theorem}
\newtheorem{corollary}[theorem]{Corollary}
\newtheorem{lemma}[theorem]{Lemma}
\newtheorem{proposition}[theorem]{Proposition}

\theoremstyle{definition}

\newtheorem{claim}{Claim}[theorem]
\theoremstyle{remark}
\newtheorem*{claimproof}{Proof}

\newcommand{\R}{\mathbb{R}}
\newcommand{\N}{\mathbb{N}}

\newcommand{\claimend}{{\hfill $\blacksquare$}}
\renewcommand{\restriction}{\mathord{\upharpoonright}}

\title{Embedding $C(K)$ for countable $K$ into $C([0,1])$ as Besicovitch functions}
\author{
Jan Dud\'ak\footnote{https://orcid.org/0000-0003-0627-6641}\\
Department of Mathematics\\
Faculty of Civil Engineering, Czech Technical University\\
Prague, Czechia\\
E-mail: jan.dudak.2@cvut.cz}

\begin{document}
\maketitle

\renewcommand{\thefootnote}{}

\footnote{2020 \emph{Mathematics Subject Classification}: Primary 46B87; Secondary 46E15, 26A27, 54C35.}

\footnote{\emph{Key words and phrases}: lineability, spaceability, Besicovitch functions, isometric embedding, countable compact.}

\renewcommand{\thefootnote}{\arabic{footnote}}
\setcounter{footnote}{0}

\begin{abstract}
Extending a recent result from [Bull. Belg. Math. Soc. Simon Stevin 33 (2026), 138–144], we prove that the space of continuous functions $C(X)$ on any countable compact space $X$ admits an isometric copy in $C([0,1])$ consisting, except for the zero function, entirely of Besicovitch functions, i.e., functions that have no one-sided derivative (finite or infinite) at any point.
\end{abstract}

\section{Introduction}

Over the past few decades, identifying linear structures of mathematical objects with specific properties has emerged as a distinct trend across diverse subfields of mathematics. This widespread interest has generated an extensive body of literature spanning linear chaos, real and complex analysis, set theory, linear and multilinear algebra, operator theory, topology, measure theory, functional analysis, and abstract algebra. (see, e.g., \cite{L01,L02,L03,L04,L05}).

Given a property of functions, we say that the subset $M$ of $C([0,1])$ consisting of functions which have the property is {\em lineable} if $M \cup \{0\}$ contains an infinite-dimensional subspace. The set $M$ will be called {\em spaceable} if $M \cup \{0\}$ contains a closed infinite-dimensional subspace.

Let us provide a brief overview of lineability and spaceability results related to the differentiability of functions. It is clear that the set of everywhere differentiable functions on $[0,1]$ is a linear subspace of $C([0,1])$ and hence it is lineable. However, as shown by V. I. Gurariy \cite{G-1966}, the set of everywhere differentiable functions on $[0,1]$ is not spaceable, i.e. there is no infinite-dimensional closed subspace of $C([0,1])$ in which every function is differentiable everywhere on $[0,1]$. On the other hand, it was shown in the same article that the set of {\em nowhere} differentiable continuous functions on $[0,1]$ is lineable. This was improved by Fonf, Gurariy and Kade\v{c} \cite{FGK}, with a preprint appearing as early as 1990, who showed that the set of nowhere differentiable continuous functions on $[0,1]$ is spaceable; that is, there is a closed, infinite-dimensional subspace of $C([0,1])$ every non-zero element of which is nowhere differentiable on $[0,1]$. In fact, much more is true; L. Rodr\'iguez-Piazza \cite{LRP} proved that there exists a closed subspace of $C([0,1])$ which is universal for separable Banach spaces (i.e. every separable Banach space can be linearly isometrically embedded into it) such that every nonzero function in the subspace is nowhere differentiable. An even stronger result was obtained by S. Hencl \cite{H} who showed that there exists a closed subspace of $C([0,1])$ which is universal for separable Banach spaces and in which every nonzero function is nowhere approximately differentiable and nowhere H\"older.

Given a real-valued function $f$ of one real variable, we say that $f$ does not have a one-sided derivative at a point $x$ if neither the left nor the right derivative of $f$ at $x$ exists, not even as an infinite value. A continuous function having no one-sided derivative at any point is called a Besicovitch function. It was shown in \cite{JB15,JB26} that the set of Besicovitch functions on $[0,1]$ is spaceable. This result was recently improved in \cite{BobokDudak2026}, where it was shown that there is a linear isometric embedding of the space $\boldsymbol{c}$ of convergent sequences (with the supremum norm) into $C([0,1])$ such that every nonzero function in the image is Besicovitch. Since $\boldsymbol{c}$ is linearly isometric to $C(K)$, where $K$ is the one-point compactification of $\N$, a natural question arises: for which compact metrizable spaces $X$ does $C([0,1])$ contain a linearly isometric copy of $C(X)$ consisting entirely of Besicovitch functions (up to the zero function)?

In this paper we provide a complete answer to the question for the class of countable compact spaces.

\begin{theorem}\label{MainResult}
    For any countable compact space $X$, there exists a subspace $Y$ of $C([0,1])$ linearly isometric to $C(X)$ such that every nonzero $f \in Y$ is Besicovitch.
\end{theorem}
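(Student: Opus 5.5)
We will reduce to ordinals and then build an isometric operator $T\colon C(X)\to C([0,1])$ by transfinite induction, in the spirit of the construction for $\boldsymbol{c}$ in \cite{BobokDudak2026}. By the Mazurkiewicz--Sierpi\'nski theorem, every countable compact space is homeomorphic to an ordinal interval $[1,\omega^\alpha n]$ with $\alpha<\omega_1$ and $n\in\N$. We may also reduce to the case where $X$ is a countable compact subset of $[0,1]$ itself. The key structural fact is that $C([1,\omega^\alpha n])$ is isometric to a direct sum (with the sup norm) of $n$ copies of $C([1,\omega^\alpha])$. Also, $C([1,\omega^{\beta+1}])$ is the space of functions on the one-point compactification of a countable disjoint union of copies of $[1,\omega^\beta]$. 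So it suffices to carry the construction through disjoint unions and one-point compactifications.

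Concretely, for $f\in C(X)$ we set $Tf=\sum_{x}\cdots$ as follows. We enumerate the isolated points $x_1,x_2,\dots$ of $X$, which are dense. Every $f\in C(X)$ is determined by its values $f(x_k)$. We look for $Tf(t)=\sum_k a_k(f)\,\varphi_k(t)$, where the $\varphi_k$ are Besicovitch-type "bump" functions supported in disjoint-ish regions. Following the $\boldsymbol{c}$ case, a cleaner route is to fix an isometric, positive embedding $E\colon C(X)\to C([0,1])$, for instance $Ef=f\circ r$ with $r\colon[0,1]\to X$ a continuous surjection obtained by linear interpolation on the gaps of $X\subset[0,1]$. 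We then perturb:
\[
Tf=Ef+\sum_{k}\lambda_k\,(\Delta_k f)\,B_k,
\]
where $\Delta_k f$ are "local oscillation" functionals and the $B_k$ are small Besicovitch functions placed densely. The main requirements are the following. (i) $\|Tf\|=\|f\|$: we arrange $Tf$ to attain $f$'s values at suitable points and to stay within $\|f\|$, using multiplicative rather than additive perturbations (such as $Ef\cdot(1-\varepsilon\,\text{stuff})$) to keep control of the norm. (ii) Every nonzero $Tf$ has no one-sided derivative anywhere.

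The main obstacle is (ii) at points where $f$ is locally zero or locally constant, and at accumulation points of $X$ of high Cantor--Bendixson rank. There, nonzero behaviour of $f$ can occur only on smaller and smaller pieces, and the Besicovitch behaviour has to be inherited uniformly. My plan is to handle this by transfinite induction on the rank $\alpha$, proving the stronger statement that there is an isometric embedding $T_\alpha$ of $C([1,\omega^\alpha])$ with the following property. Every nonzero $T_\alpha f$ is Besicovitch, and moreover satisfies a quantitative "Besicovitch with constant proportional to $\|f\|$ on every interval where $Tf$ does not vanish" estimate. This estimate should be stable under rescaling and gluing.

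The successor step places scaled copies of $T_\beta$ on disjoint intervals $I_m$ accumulating at $1$. On these intervals $f$ restricted to the $m$-th copy is encoded, and the limit value $f(\omega^{\beta+1})$ is encoded by a fixed Besicovitch function $g$ (from \cite{BobokDudak2026}) multiplied by the limit value and glued across gaps. The limit step is handled the same way with ranks $\beta_m\uparrow\alpha$. At the limit point, the one-sided derivative fails to exist for two reasons: either $g$ contributes nondifferentiability, or the copies contribute oscillation of size comparable to their norm over intervals comparable to their distance. The second case is secured by choosing the lengths of $I_m$ to decay much faster than their distances. Verifying this dichotomy carefully, without the copies cancelling $g$'s oscillation, is the step I expect to be hardest.
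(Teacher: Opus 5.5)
\textbf{There is a genuine gap, and it is exactly at the obstacle you flag.}

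In your successor step, the block $I_m$ carries a copy of $T_\beta$ applied to the restriction of $f$ to the $m$-th copy of $[1,\omega^\beta]$. The gaps carry only $f(\omega^{\beta+1})\,g$. Take $f$ to be the indicator of one isolated point in the first block. Then $f$ is nonzero but vanishes on every other block and at the limit point. Hence $Tf$ is identically zero, or at best affine, on every $I_m$ with $m\ge 2$ and on the gaps, so it is not Besicovitch.

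Your other devices have the same defect:
\begin{itemize}
\item a multiplicative perturbation $Ef\cdot(1-\varepsilon\,\cdots)$ vanishes wherever $Ef$ does;
\item ``local oscillation'' coefficients vanish where $f$ is locally constant, and they are not even linear.
\end{itemize}
More generally, suppose a subinterval of $[0,1]$ sees $f$ only through finitely many linear functionals (values at gap endpoints, coefficients $\ell_k(f)$ of bumps $B_k$). Then some nonzero $f\in C(X)$ annihilates all of them, and $Tf$ vanishes on that subinterval. Your proposed inductive estimate, valid ``on every interval where $Tf$ does not vanish'', concedes the problem rather than solving it, since a Besicovitch function cannot vanish on an interval.

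Two smaller points. A continuous surjection $r\colon[0,1]\to X$ cannot exist for countable $X$ with more than one point, since $X$ is disconnected; you presumably mean the piecewise-affine extension operator. Also, a pure norm bound on the blocks would not give continuity at the accumulation point.

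\textbf{The missing idea} is that the non-differentiability must come from a term depending on all of $f$ injectively.

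The paper first builds such a term. It takes from \cite{JB15,JB26} a bounded operator $T_z\colon\mathcal B\cong\ell^\infty\to C([0,1])$ whose nonzero values are Besicovitch and vanish at $0$ and $1$. Composing with an isometric embedding $C(X)\hookrightarrow\ell^\infty$ gives $T\colon C(X)\to C([0,1])$ with $\lVert T\rVert\le 1$ and $T(f)$ Besicovitch for \emph{every} nonzero $f$.

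The induction is then not on isometric embeddings of $C(\omega^\alpha+1)$. It is on operators $\Phi\colon C(X)\to C([a,b])$ defined on all of $C(X)$, where $K\subseteq X$ is homeomorphic to $\theta(\alpha,k)$, such that:
\begin{itemize}
\item $\Phi(f)$ is Besicovitch for every nonzero $f\in C(X)$, even when $f\restriction_K=0$;
\item $\lVert f\restriction_K\rVert\le\lVert\Phi(f)\rVert\le\lVert f\rVert$;
\item $\min_K f-\delta\lVert f\rVert\le\Phi(f)\le\max_K f+\delta\lVert f\rVert$, which is what yields continuity at the accumulation point.
\end{itemize}

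Blocks $\Phi_m$ for lower-rank pieces $K_m$ are joined by affine connectors through $f(p)$ and $(1-2^{-m}\delta)f(p)$. The connectors are pulled inward so that shrinking copies $2^{-m-n}\delta\,T(f)(u_{m,n}(\cdot))$ can be added without exceeding $\lVert f\rVert$.

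No Besicovitch $g$ is used at the accumulation point, so the cancellation you expected to be hardest never arises. If $f(p)\ne 0$, the difference quotients at $0$ are $0$ along $10\cdot 2^{-m}$ and $-\delta f(p)/9$ along $9\cdot 2^{-m}$. If $f(p)=0$, they are at least $\delta\lvert T(f)(r)\rvert/20$ in absolute value along the $T(f)$-copies.
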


Regarding the uncountable case, let us note that since $C(X)$ is universal for separable Banach spaces whenever $X$ is an uncountable compact metrizable space, a positive answer for any such space would automatically solve the problem for all of them, echoing the result of Rodr\'iguez-Piazza. Specifically, it would imply the existence of a closed subspace of $C([0,1])$ that is universal for separable Banach spaces and in which every nonzero function is Besicovitch. Whether such a subspace exists remains an open question.

It is worth pointing out that, while the set of all nowhere differentiable functions on $[0,1]$ is comeager in $C([0,1])$, the set of all Besicovitch functions on $[0,1]$ is meager in $C([0,1])$ \cite{Saks1932}. This Baire category distinction, however, implies nothing regarding this open question.

For further details, we refer the interested reader to the monograph by Jarnicki and Pflug \cite{Monsters}, where various types of continuous, nowhere differentiable functions are discussed and treated in depth.

\section{Preliminaries}\label{SectionPreliminaries}

We denote the set of positive integers by $\N$. Throughout this paper, all topological spaces under consideration are assumed to be Hausdorff; in particular, a compact space always means a compact Hausdorff space. Under this convention, every countable compact space is automatically metrizable and, in fact, homeomorphic to a compact subset of $\R$.

For a continuous, real-valued function $f$ on a compact space $X$, we denote its supremum norm by $\lVert f \rVert$. By a nonzero function, we mean a function that is not identically zero.

For any subset $A$ of a topological space we denote by $A'$ the derived set of $A$. Using transfinite recursion, we define the Cantor-Bendixson derivative of order $\alpha$ of $A$, denoted by $A^{(\alpha)}$, for any ordinal number $\alpha$ as follows:
\begin{equation*}
\displaystyle
A^{(\alpha)}=
    \begin{cases}
        A &\textup{if }\alpha=0,\\
        \big(A^{(\beta)}\big)' &\textup{if }\alpha=\beta+1,\\
        \displaystyle \bigcap_{\beta<\alpha} A^{(\beta)} &\textup{if }\alpha \textup{ is a limit ordinal}.
    \end{cases}
\end{equation*}
For a nonempty countable compact space $X$ there is a unique ordinal $\alpha < \omega_1$ for which $X^{(\alpha)}$ is a nonempty finite set. We call this ordinal the Cantor-Bendixson rank of $X$ and denote it by $\textup{CB}(X)$. Equivalently, $\textup{CB}(X)$ is the least $\alpha < \omega_1$ such that $X^{(\alpha+1)}=\emptyset$. It is worth noting that in most publications, the Cantor-Bendixson rank of a topological space $X$ is defined as the least ordinal $\alpha$ satisfying $X^{(\alpha)}=X^{(\alpha +1)}$, which, in the case of countable compact spaces, simplifies to the least $\alpha$ with $X^{(\alpha)}= \emptyset$. 

For any $\alpha \in \omega_1$ and $k \in \omega \setminus \{ 0 \}$, denote by $\theta(\alpha,k)$ the ordinal number given by
\begin{equation*}
    \theta(\alpha,k)=
    \begin{cases}
        k &\textup{if } \alpha=0,\\
        \omega^\alpha \cdot k+1 &\textup{if } \alpha \in \omega_1 \setminus \{ 0 \}.
    \end{cases}
\end{equation*}
Equipped with the order topology, $\theta(\alpha,k)$ forms a nonempty countable compact space. Conversely, as shown by Mazurkiewicz and Sierpi\'nski in 1920 \cite{MazurkiewiczSierpinski}, if $X$ is a nonempty countable compact space, then $X$ is homeomorphic to $\theta(\alpha,k)$, where $\alpha =\textup{CB}(X) \in \omega_1$ and $k \in \omega \setminus \{ 0 \}$ is the cardinality of $X^{(\alpha)}$.

Note that for any $\alpha \in \omega_1 \setminus \{ 0 \}$, we have $(\omega^\alpha+1)^{(\alpha)}=\{ \omega^\alpha \}$. Consequently, if $K$ is a nonempty compact subset of $\omega^\alpha+1$ not containing the point $\omega^\alpha$, then
\[ K^{(\alpha)}\subseteq K \cap (\omega^\alpha+1)^{(\alpha)} = K \cap \{ \omega^\alpha \} = \emptyset , \]
which implies that $\textup{CB}(K)<\alpha$. Furthermore, $\theta(\alpha,k)$ is homeomorphic to the topological sum of $k$ copies of $\theta(\alpha,1)$.

\section{The proof of the result}

For any $\alpha \in \omega_1$ and $k \in \omega \setminus \{ 0 \}$, let $S(\alpha,k)$ be the following statement:\vspace{1em}

\noindent\fbox{\begin{minipage}{\textwidth}
For any countable compact space $X$, any $K \subseteq X$ homeomorphic to $\theta(\alpha,k)$, any $a,b \in \R$ with $a<b$, and any $\delta >0$, there exists a bounded linear operator $\Phi \colon C(X) \to C([a,b])$ such that for any $f \in C(X)$:
\begin{enumerate}
    \item if $f$ is not the constant zero function, then $\Phi(f)$ is Besicovitch;
    \item $\lVert f\restriction_K \rVert \leq \lVert \Phi(f) \rVert \leq \lVert f \rVert$;
    \item $\displaystyle \min_{x \in K}f(x)-\delta \lVert f \rVert \leq \Phi(f)(t) \leq \max_{x \in K}f(x)+\delta \lVert f \rVert$ for all $t \in [a,b]$.
\end{enumerate}
\end{minipage}}

\vspace{1em}We will use transfinite induction to show that $S(\alpha,k)$ holds for all $\alpha \in \omega_1$ and $k \in \omega \setminus \{ 0 \}$. Let us note that the sole purpose of property (3) in the statement is to make the induction argument work.

\begin{lemma}\label{lemma}
    For any separable normed linear space $E$, there exists a continuous linear operator $T \colon E \to C([0,1])$ with $\lVert T \rVert \leq 1$ such that for every nonzero $u \in E$, the function $T(u)$ is Besicovitch and satisfies $T(u)(0)=T(u)(1)=0$.
\end{lemma}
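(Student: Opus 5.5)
We plan to reduce the lemma to the known spaceability of Besicovitch functions, cited in the introduction from \cite{JB15,JB26} and \cite{BobokDudak2026}. The key point is that, by those results, there is an infinite-dimensional closed subspace, or even an isometric copy of $\boldsymbol{c}$, whose nonzero elements are all Besicovitch. We will take from it a bounded sequence of functions that is "independent enough" and map a dense countable part of $E$ onto it. Concretely, fix a dense sequence of unit-norm functionals is not available in general. Instead, we use separability of $E$ differently: choose a sequence $(\varphi_n)$ in the unit ball of $E^*$ that is norming, i.e. $\lVert u\rVert=\sup_n |\varphi_n(u)|$. Such a sequence exists by Hahn--Banach applied to a dense sequence of $E$. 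Then $u\mapsto (\varphi_n(u))_n$ is an injective linear map into $\ell_\infty$ of norm at most $1$. Composing with the diagonal weights $2^{-n}$ gives an injective operator $J\colon E\to c_0\subseteq \boldsymbol{c}$ with $\lVert J\rVert\le 1$.

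Next let $V\colon \boldsymbol{c}\to C([0,1])$ be the isometric embedding of \cite{BobokDudak2026}, with all nonzero images Besicovitch. Since $J$ is injective, $V(J(u))$ is Besicovitch for $u\ne 0$, and $\lVert VJ\rVert\le 1$. It remains to enforce $T(u)(0)=T(u)(1)=0$. Here we must be careful: subtracting the affine interpolant $\ell_u(t)=(1-t)g(0)+tg(1)$, where $g=VJ(u)$, preserves the Besicovitch property, since adding a $C^1$ function does not change the existence of one-sided derivatives (finite or infinite). It also preserves linearity. However, it can double the norm, so we define
\[ T(u)=\tfrac13\bigl(g-\ell_u\bigr), \qquad g=VJ(u). \]
Then $\lVert g-\ell_u\rVert\le \lVert g\rVert+\max(|g(0)|,|g(1)|)\le 2\lVert g\rVert$, hence $\lVert T\rVert\le 1$. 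Also $T(u)$ is nonzero and Besicovitch whenever $u\neq0$, because $g$ is Besicovitch while $\ell_u$ is affine, so $g-\ell_u$ is Besicovitch, in particular not identically zero.

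The main obstacle is purely the availability of the input: we rely on the cited theorem that a copy of $\boldsymbol{c}$ (or at least an infinite-dimensional closed subspace $Z$) consists of Besicovitch functions apart from $0$. If only a closed infinite-dimensional $Z$ is granted, we replace $J$ by an injective bounded map $E\to Z$. To build it, we pick a normalized basic sequence $(z_n)$ in $Z$, which exists by Mazur's theorem, and set $u\mapsto\sum_n 2^{-n}\varphi_n(u)z_n$. This series converges absolutely, has norm at most $1$, and is injective because basic sequences give unique expansions and the $\varphi_n$ separate points. The rest of the argument is unchanged.
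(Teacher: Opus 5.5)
Your argument is correct, but it takes a different route from the paper.

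\textbf{The paper's route.} It quotes from \cite{JB15,JB26} a bounded operator $T_z$ defined on the whole nonseparable space $\mathcal{B}\cong\ell^\infty$. Every nonzero image under $T_z$ is Besicovitch and already vanishes at $0$ and $1$. The paper composes $T_z$ with an isometric embedding of $E$ into $\mathcal{B}$ (for instance your norming-functional map without the weights) and rescales by $\lVert T_z\rVert$.

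\textbf{Your route.} You notice that the lemma only needs an \emph{injective} operator of norm at most $1$, not an isometry. The weights $2^{-n}$ therefore let you land in the separable space $c_0$. There any Besicovitch-valued injective operator will do: the copy of $\boldsymbol{c}$ from \cite{BobokDudak2026}, or, via Mazur's basic sequence theorem, just the spaceability result of \cite{JB15,JB26}. The paper gets the endpoint condition for free from $T_z$. You enforce it by subtracting the linear interpolant, which preserves the Besicovitch property and at most doubles the norm. So the factor $\tfrac12$ would already suffice in place of $\tfrac13$.

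\textbf{What each buys.} The paper's proof is shorter, given the specific operator $T_z$ on $\ell^\infty$. Yours shows that the lemma, and hence the whole construction, follows from mere spaceability: any closed infinite-dimensional subspace of Besicovitch functions would serve as input.

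The opening sentences of your proposal do not match what you actually do and should be deleted. These are the remark about mapping a dense part of $E$ onto a bounded sequence, and the unfinished sentence about unit-norm functionals.
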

\begin{proof}
    This lemma builds directly on \cite{JB15,JB26}. Let us therefore recall the result from these references that we utilize below. Let $\mathcal{B}$ be the space of all bounded functions $f \colon (0,1] \to \R$ whose restriction to $\big( \frac{1}{n+1} , \frac{1}{n} \big]$ is constant for every $n \in \N$. Equipped with the supremum norm, $\mathcal{B}$ becomes a (non-separable) Banach space that is linearly isometric to $\ell^\infty$. It was shown in \cite{JB15,JB26} that there exists a bounded linear operator $T_z \colon \mathcal{B} \to C([0,1])$ such that for any nonzero $f \in \mathcal{B}$, the function $T_z(f)$ is Besicovitch and satisfies $T_z(f)(0)=T_z(f)(1)=0$.

    Since $E$ is separable, there is a linear isometric embedding $\psi$ of $E$ into $\mathcal B$. Define the operator $T \colon E \to C([0,1])$ by
    \[ T(u)= \frac{1}{\lVert T_z \rVert} T_z \big( \psi (u) \big). \]
    Clearly, $T$ has the desired properties.
\end{proof}

\begin{proposition}\label{Prop1}
    The statement $S(0,1)$ holds.
\end{proposition}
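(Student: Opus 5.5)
The point is that $K=\{x_0\}$ is a single point of $X$, so $\lVert f\restriction_K\rVert=|f(x_0)|$ and $\min_K f=\max_K f=f(x_0)$. Thus we need a bounded linear $\Phi$ such that $\Phi(f)$ is Besicovitch for $f\neq0$, satisfies $|\Phi(f)|\le\lVert f\rVert$, stays within $\delta\lVert f\rVert$ of the constant $f(x_0)$, and equals $f(x_0)$ at some point of $[a,b]$. I would build $\Phi(f)$ as the constant $f(x_0)$ perturbed by a small Besicovitch term from Lemma~\ref{lemma}, arranged as a convex combination so that the sup-norm bound is automatic.

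\emph{Choice of $E$.} The difficulty is that for constant $f$ the natural perturbation $T(f-f(x_0)\mathbf 1)$ vanishes, while for $f(x_0)\neq0$ a separately added Besicovitch term could cancel against it. To avoid both, I apply Lemma~\ref{lemma} to $E=\R\oplus C(X)$ with the max norm; it is separable since $X$ is countable compact metrizable. I send $f\mapsto u_f=(f(x_0),f)$, which is linear, injective, and satisfies $\lVert u_f\rVert=\lVert f\rVert$. Let $T$ be the resulting operator, so $T(u_f)$ is Besicovitch for every $f\neq0$, satisfies $|T(u_f)|\le\lVert f\rVert$, and vanishes at $0$ and $1$. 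Fix $\eta\in(0,\min\{\delta,1\})$ and a fixed $C^1$ function $\rho\colon[a,b]\to[0,1]$.

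\emph{Definition of $\Phi$.} I would try
\[
\Phi(f)(t)=\bigl(1-\eta\rho(t)\bigr)f(x_0)+\eta\rho(t)\,T(u_f)\bigl(s(t)\bigr),\qquad s(t)=\tfrac{t-a}{b-a}.
\]
At each $t$ this is a convex combination of two numbers of modulus at most $\lVert f\rVert$, so $\lVert\Phi(f)\rVert\le\lVert f\rVert$. Moreover $|\Phi(f)(t)-f(x_0)|\le\eta\rho(t)\cdot2\lVert f\rVert$, which gives property (3) after shrinking $\eta$ to at most $\delta/2$. Since $T(u_f)(0)=0$, we get $\Phi(f)(a)=(1-\eta\rho(a))f(x_0)$. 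So if $\rho(a)=0$ then $\Phi(f)(a)=f(x_0)$, which gives $\lVert\Phi(f)\rVert\ge|f(x_0)|$ and completes (2).

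\emph{Besicovitch property.} Wherever $\rho>0$, write $\Phi(f)=A+P\cdot B$ with $A=(1-\eta\rho)f(x_0)$ of class $C^1$, $P=\eta\rho>0$ of class $C^1$, and $B=T(u_f)\circ s$ Besicovitch. If $\Phi(f)$ had a one-sided (finite or infinite) derivative at some point, then so would $B=(\Phi(f)-A)/P$. Indeed, division by a positive $C^1$ function and subtraction of a $C^1$ function preserve existence of one-sided derivatives, including infinite ones. This is impossible.

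\emph{Main obstacle.} The danger is the set where $\rho=0$, and in particular $t=a$, where we need $\rho(a)=0$ for (2). I would resolve this by moving the point of exact attainment off the support of the perturbation's degeneracy. Split $[a,b]$ at an interior point and place a second, rescaled copy of $T(u_f)$ there, with a constant weight. Match the two pieces at the junction using $T(u_f)(0)=T(u_f)(1)=0$. Then both one-sided derivatives fail at the junction, because each side is an affine image of a Besicovitch function. Alternatively, one can keep $\rho>0$ everywhere and obtain equality with $f(x_0)$ by reserving one coordinate of $E$ for a fixed Besicovitch function $c$ with $\max c=1$ that multiplies $f(x_0)$ (this is linear in $u_f$). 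Checking that this gluing keeps all three properties, especially the lower bound in (2) together with non-differentiability at the junction, is the step I expect to need the most care.
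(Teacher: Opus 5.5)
There is a genuine gap, and it sits in the step you postpone. Your convex-combination setup is sound where $\rho>0$: the bound $\lVert\Phi(f)\rVert\le\lVert f\rVert$, property (3), and the reduction of the Besicovitch property to that of $T(u_f)\circ s$ are all correct. The detour through $E=\R\oplus C(X)$ is unnecessary, since $T(f)$ is already Besicovitch for $f\neq0$ and adding a $C^1$ term cannot destroy that. The postponed step needs a new idea, not extra care.

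Attainment forces the weight to vanish. For $f=\mathbf 1$, your $\Phi(f)$ reaches the value $1$ only where $\eta\rho=0$ or where $T(u_{\mathbf 1})\circ s=1$, and the Lemma gives no control over the latter. But at a zero of a $C^1$ weight, a single continuous copy produces a one-sided derivative. For example, if $\rho(a)=0$, then for every $f$
\[ \frac{\Phi(f)(t)-\Phi(f)(a)}{t-a}=\eta\,\frac{\rho(t)}{t-a}\bigl(T(u_f)(s(t))-f(x_0)\bigr)\longrightarrow-\eta\rho'(a)f(x_0)\quad(t\to a^+). \]
At an interior zero of $\rho\ge0$ it is worse: $\rho'=0$ there, so you get a two-sided derivative.

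Neither fix escapes this.
\begin{itemize}
\item In (a), the junction is a common zero of both copies. With a constant weight $\eta$, the value there is $(1-\eta)f(x_0)$, so attainment is lost. Restoring it means letting the weight vanish somewhere, which brings back the computation above.
\item In (b), the Lemma does not let you prescribe $T$ on a reserved coordinate. Adding $f(x_0)c$ by hand brings back the cancellation you yourself worried about: nothing prevents the sum from having a one-sided derivative somewhere. Moreover, the natural estimate $|f(x_0)|\,|c|+\eta\rho\lVert f\rVert\le\lVert f\rVert$ forces $\rho=0$ wherever $c=1$.
\end{itemize}

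The missing idea is to let \emph{infinitely many} rescaled copies accumulate at the attainment point, with amplitudes proportional to their distance from it. The paper takes $[a,b]=[0,1]$ and
\[ \Phi(f)(t)=\Bigl(1-\tfrac{\delta t}{2}\Bigr)f(p)+R(f)(t),\qquad R(f)(t)=2^{-n-1}\delta\,T(f)(2^nt-1)\ \text{ on } (2^{-n},2^{1-n}], \]
with $R(f)(0)=0$. This is exactly your form with weight $\delta t/2$. The difference is that the factor multiplying the weight, namely $2^{-n}t^{-1}T(f)(2^nt-1)$ on the $n$-th interval, is bounded by $\lVert f\rVert$ but has no limit as $t\to0^+$. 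The required properties then follow:
\begin{itemize}
\item $\Phi(f)(0)=f(p)$, which gives attainment.
\item On the $n$-th interval the sup bound is $(1-2^{-n-1}\delta)\lVert f\rVert+2^{-n-1}\delta\lVert f\rVert=\lVert f\rVert$.
\item The junctions $2^{-n}$ are handled by $T(f)(0)=T(f)(1)=0$ and the one-sided non-differentiability of $T(f)$ at $0$ and $1$.
\item At $0$, the zeros $R(f)(2^{-n})=0$ force any right derivative to be $0$. Yet at the points $s_n$ with $2^ns_n-1=r$, where $T(f)(r)\neq0$, the difference quotients have modulus at least $\frac{\delta}{4}|T(f)(r)|$.
\end{itemize}
With finitely many copies and a $C^1$ weight, as in both of your variants, the factor is continuous at the attainment point, so this mechanism is unavailable.
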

\begin{proof}
    Let $X,K,a,b$ and $\delta$ satisfying the assumptions of $S(0,1)$ be given. We can assume without any loss of generality that $[a,b]=[0,1]$ and that $\delta<1$. Since $K$ is homeomorphic to the ordinal $\theta(0,1)=1$, the set $K$ is a singleton, say $K= \{ p \}$. Define an operator $\Phi_0 \colon C(X) \to C([0,1])$ by
    \[ \Phi_0(f)(t)= \frac{2-\delta t}{2} f(p). \]
    In other words, for any $f \in C(X)$, $\Phi_0(f)$ is the affine function on $[0,1]$ satisfying $\Phi_0(f)(0)=f(p)$ and $\Phi_0(f)(1)=(1-\delta/2)f(p)$. The final operator $\Phi$ is obtained by adding a perturbation operator \(R\), to be defined below, to $\Phi_0$.

    Let $T \colon C(X) \to C([0,1])$ be the operator from Lemma \ref{lemma}. For each $n \in \N$, let $u_n \colon \R \to \R$ be the affine function which satisfies $u_n(2^{-n})=0$, $u_n(2^{1-n})=1$. Formally, $u_n(t)=2^nt-1$ for any $t \in \R$. Define an operator $R \colon C(X) \to C([0,1])$ by
    \begin{equation*}
        R(f)(t)=
        \begin{cases}
            0 &\textup{if } t=0,\\
            2^{-n-1}\delta \, T(f)\big( u_n(t) \big) &\textup{if } t\in(2^{-n},2^{1-n}], n \in \N .
        \end{cases}
    \end{equation*}
    Since $T(f)$ is continuous and vanishes at $0$ and $1$, the continuity of $R(f)$ follows.
    
    Let $\Phi := \Phi_0+R$. It is clear that $\Phi_0$, $R$ and $\Phi$ are bounded linear operators. For any $f \in C(X)$, we have $\Phi(f)(0)=\Phi_0(f)(0)=f(p)$, which shows that $\lVert \Phi(f) \rVert \geq \lVert f \restriction_K \rVert$. On the other hand, for any $n \in \N$ and $t \in (2^{-n},2^{1-n}]$,
    \[ |\Phi(f)(t)| \leq |\Phi_0(f)(t)|+|R(f)(t)| \leq \frac{2-2^{-n} \delta}{2}\lVert f \rVert +2^{-n-1} \delta \lVert f \rVert = \lVert f \rVert , \]
    hence $\lVert \Phi(f) \rVert \leq \lVert f \rVert$. Moreover, again for any $n \in \N$ and $t \in (2^{-n},2^{1-n}]$,
    \begin{align*}
        \Phi(f)(t)&= f(p)-\frac{\delta t}{2} f(p)+R(f)(t) \leq f(p)+\frac{\delta t}{2}|f(p)|+2^{-n-1} \delta \lVert f \rVert\\
    &\leq f(p)+2^{-n}\delta \lVert f \rVert+2^{-n-1} \delta \lVert f \rVert \leq f(p)+\delta \lVert f \rVert =\max_{x \in K}f(x)+\delta \lVert f \rVert
    \end{align*}
    and
    \begin{align*}
        \Phi(f)(t)&= f(p)-\frac{\delta t}{2} f(p)+R(f)(t) \geq f(p)-\frac{\delta t}{2}|f(p)|-2^{-n-1} \delta \lVert f \rVert\\
    &\geq f(p)-2^{-n}\delta \lVert f \rVert-2^{-n-1} \delta \lVert f \rVert \geq f(p)-\delta \lVert f \rVert = \min_{x \in K}f(x)-\delta \lVert f \rVert .
    \end{align*}
    It remains to show that $\Phi(f)$ is a Besicovitch function (assuming $f$ is nonzero). Since $\Phi_0(f)$ is affine (and thus differentiable), it suffices to show that $R(f)$ is Besicovitch. Clearly, by the definition of $R(f)$ and the fact that $T(f)$ is Besicovitch, $R(f)$ does not have a one-sided derivative at any point $t \in (0,1]$. To prove that the right derivative of $R(f)$ at $0$ does not exist, note that for every $n \in \N$,
    \[ R(f)(2^{-n})=0=R(f)(0). \]
    This shows that if the right derivative of $R(f)$ at $0$ existed, it would be equal to $0$. Since $T(f)$ is a Besicovitch function, there exists $r \in (0,1)$ with $T(f)(r) \neq 0$. For each $n \in \N$, let $s_n \in (2^{-n},2^{1-n})$ be the point satisfying $u_n(s_n)=r$. Then
    \[ \Bigg| \frac{R(f)(s_n)-R(f)(0)}{s_n-0} \Bigg| = \Bigg| \frac{R(f)(s_n)}{s_n} \Bigg| = \frac{\big| 2^{-n-1}\delta \, T(f)(r) \big|}{s_n} \geq \frac{2^{-n-1}\delta}{2^{1-n}} |T(f)(r)| = \frac{\delta}{4} |T(f)(r)| \]
    for each $n \in \N$. Therefore, the right derivative of $R(f)$ at $0$ cannot be $0$.
\end{proof}

\begin{proposition}\label{Prop2}
    Let $\alpha \in \omega_1$. If $S(\alpha,1)$ holds, then so does $S(\alpha,k)$ for every $k \in \omega \setminus \{ 0 \}$.
\end{proposition}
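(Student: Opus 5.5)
Since $\theta(\alpha,k)$ is homeomorphic to the topological sum of $k$ copies of $\theta(\alpha,1)$, we can write $K=K_1\cup\dots\cup K_k$, where the $K_i$ are pairwise disjoint, clopen in $K$, and each is homeomorphic to $\theta(\alpha,1)$. The plan is to place the operators given by $S(\alpha,1)$ for the individual $K_i$ on disjoint closed subintervals of $[a,b]$. We then fill the $k-1$ gaps between consecutive subintervals with "connecting" functions. These must glue continuously, stay inside the required bounds, and be Besicovitch, including at the junction points.

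\emph{Step 1.} Choose points $a=c_1<d_1<c_2<d_2<\dots<c_k<d_k=b$. For each $i$, apply $S(\alpha,1)$ to $X$, $K_i$, the interval $[c_i,d_i]$ and $\delta/2$, obtaining $\Phi_i$. On $[c_i,d_i]$ set $\Phi(f)=\Phi_i(f)$. Then
\[ \lVert f\restriction_K\rVert=\max_i\lVert f\restriction_{K_i}\rVert\le\max_i\lVert\Phi_i(f)\rVert\le\lVert\Phi(f)\rVert, \]
which gives the lower bound in (2). Moreover, since $\min_{K_i}f\ge\min_K f$ and $\max_{K_i}f\le\max_K f$, property (3) holds on these pieces with $\delta/2$ in place of $\delta$.

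\emph{Step 2.} Fill each gap $[d_i,c_{i+1}]$ using a two-sided version of the construction in Proposition~\ref{Prop1}. Put $A=\Phi_i(f)(d_i)$ and $B=\Phi_{i+1}(f)(c_{i+1})$; both depend linearly on $f$. Let the main part be a continuous, piecewise linear function of $t$ that equals $A$ at $d_i$ and moves linearly toward $(1-\eta)A$ on the left half of the gap, and that equals $B$ at $c_{i+1}$ and moves toward $(1-\eta)B$ on the right half, connected linearly in the middle. Here $\eta$ is small, say $\eta=\delta/8$. Add a perturbation built from the operator $T$ of Lemma~\ref{lemma} with $E=C(X)$. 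Copies of $T(f)$, rescaled by affine maps onto dyadic subintervals accumulating at $d_i$ and at $c_{i+1}$ (and onto the middle portion), are multiplied by amplitudes comparable to $\eta\,2^{-n}\lVert f\rVert$. These amplitudes are matched to the linear contraction of the main part, exactly as in the estimate $\frac{2-2^{-n}\delta}{2}\lVert f\rVert+2^{-n-1}\delta\lVert f\rVert=\lVert f\rVert$ in Proposition~\ref{Prop1}.
\begin{itemize}
\item Continuity and linearity of the resulting operator follow because $T(f)$ vanishes at $0$ and $1$.
\item The bound $\lVert\Phi(f)\rVert\le\lVert f\rVert$ holds because $|A|,|B|\le\lVert f\rVert$ and the contraction absorbs the perturbation.
\item The gap values lie between convex combinations of $A$, $B$ and $0$, up to an error of at most $2\eta\lVert f\rVert$. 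This should keep them within $\min_K f-\delta\lVert f\rVert$ and $\max_K f+\delta\lVert f\rVert$. Here we use $\min_K f\le 0$ or else $(1-\eta)A\ge A-\eta\lVert f\rVert$, and the analogous inequality for $B$.
\end{itemize}

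\emph{Step 3: the Besicovitch property for nonzero $f$.} Inside each $[c_i,d_i]$, and at $a$ and $b$, this comes from $\Phi_i(f)$ being Besicovitch. At each junction point, say $d_i$, the left derivative fails to exist because $\Phi_i(f)$ is Besicovitch on $[c_i,d_i]$. The right derivative fails by the argument from Proposition~\ref{Prop1}. The main part is affine near $d_i$, and the perturbation vanishes on the sequence of dyadic endpoints but is bounded below in absolute value by a fixed multiple of the distance to $d_i$ at the points corresponding to some $r$ with $T(f)(r)\ne0$. Such an $r$ exists because $f\ne0$ implies $T(f)$ is Besicovitch, hence nonzero. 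At interior points of the gap, the piecewise affine main part has one-sided derivatives, so the absence of one-sided derivatives of the rescaled copies of $T(f)$ transfers to the sum.

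I expect the main obstacle to be Step 2. The connecting function must simultaneously respect the sharp bound $\lVert\Phi(f)\rVert\le\lVert f\rVert$ and the band in (3), while still carrying a perturbation of size comparable to the distance from the junction. This is why the main part has to contract linearly away from the endpoint values, rather than being a plain linear interpolation between $A$ and $B$. The remaining checks are routine repetitions of the computations in Proposition~\ref{Prop1}.
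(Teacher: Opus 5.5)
Your proposal is correct and is essentially the paper's construction: you place the $S(\alpha,1)$ operators on disjoint intervals, bridge the gaps with affine pieces that contract slightly away from the endpoint values, add dyadically rescaled copies of $T(f)$ with amplitudes matched to that contraction, and handle the junctions exactly as in Proposition~\ref{Prop1}. The differences are cosmetic. The paper inducts on $k$, splitting $\theta(\alpha,k+1)$ into copies of $\theta(\alpha,k)$ and $\theta(\alpha,1)$ so that only one gap is needed, and it anchors the connector at the single value $(1-\tfrac12\delta)f(p)$ with $p\in K$ rather than at $(1-\eta)A$ and $(1-\eta)B$. One point to tighten: the perturbation coefficients must be the constants $\eta 2^{-n}$, with the factor $\lVert f\rVert$ coming only from $\lVert T\rVert\le 1$, not $\eta 2^{-n}\lVert f\rVert$, otherwise linearity is lost.
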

\begin{proof}
    Assuming $S(\alpha,1)$ holds, we will proceed by induction. Given any $k \in \omega \setminus \{ 0 \}$ such that $S(\alpha,k)$ holds, let us show that $S(\alpha ,k+1)$ holds. Let $X$, $K$, $a,b$ and $\delta$ satisfying the assumptions of $S(\alpha,k+1)$ be given. We can assume without loss of generality that $[a,b]=[-1,3]$ and that $\delta<1$. Since $K$ is homeomorphic to $\theta (\alpha ,k+1)$, it can be expressed as $K=K_1 \cup K_2$, where $K_1,K_2$ are disjoint, $K_1$ is homeomorphic to $\theta (\alpha ,k)$ and $K_2$ is homeomorphic to $\theta (\alpha ,1)$. Using $S(\alpha ,k)$ for $X$, $K_1$, $a_1=-1$, $b_1=0$ and $\delta_1=\delta /2$, we obtain an operator $\Phi_1 \colon C(X) \to C([-1,0])$. In addition, we use $S(\alpha,1)$ with $X$, $K_2$, $a_2=2$, $b_2=3$ and $\delta_2=\delta /2$ to obtain an operator $\Phi_2 \colon C(X) \to C([2,3])$. Fix an arbitrary point $p \in K$ and define an operator $\Phi_0 \colon C(X) \to C([-1,3])$ as follows. For any $f \in C(X)$, let $\Phi_0 (f)$ be the unique function on $[-1,3]$ such that:
    \begin{enumerate}[label=\textup{(\roman*)}]
        \item $\Phi_0(f)(t)=\Phi_1(f)(t)$ for $t \in [-1,0]$;
        \item $\Phi_0(f)(t)=\Phi_2(f)(t)$ for $t \in [2,3]$;
        \item $\Phi_0(f)(1)=\big( 1-\frac{1}{2}\delta \big) f(p)$;
        \item $\Phi_0(f)$ is affine on each of the intervals $[0,1]$, $[1,2]$.
    \end{enumerate}
    The explicit formula for $\Phi_0(f)$ is
    \begin{equation*}
        \Phi_0(f)(t)=\begin{cases}
            \Phi_1(f)(t) &\textup{if } t \in [-1,0]\\            
            t\big( 1-\tfrac{1}{2}\delta \big)f(p)+(1-t)\Phi_1(f)(0) &\textup{if } t \in (0,1]\\
            (2-t)\big( 1-\tfrac{1}{2}\delta \big)f(p)+(t-1)\Phi_2(f)(2) &\textup{if } t \in (1,2)\\
            \Phi_2(f)(t) &\textup{if } t \in [2,3].
        \end{cases}
    \end{equation*}

    Let $T \colon C(X) \to C([0,1])$ be the operator from Lemma \ref{lemma}. For each $n \in \N$, let $u_n \colon \R \to \R$ and $v_n \colon \R \to \R$ be the increasing affine functions such that $u_n$ maps $[2^{-n},2^{1-n}]$ onto $[0,1]$ and $v_n$ maps $[2-2^{1-n},2-2^{-n}]$ onto $[0,1]$. Define an operator $R \colon C(X) \to C([-1,3])$ by
    \begin{equation*}
        R(f)(t)=
        \begin{cases}
            0 &\textup{if } t \in [-1,0] \cup [2,3] \\
            2^{-n-1}\delta \, T(f)\big( u_n(t) \big) &\textup{if } t\in(2^{-n},2^{1-n}], n \in \N\\
            2^{-n-1}\delta \, T(f)\big( v_n(t) \big) &\textup{if } t\in(2-2^{1-n},2-2^{-n}], n \in \N 
        \end{cases}
    \end{equation*}
    and let $\Phi := \Phi_0 + R$. Clearly, $\Phi$ is linear. For any $f \in C(X)$, we have
    \[ \lVert \Phi(f) \rVert \geq \lVert \Phi(f) \restriction_{[-1,0]} \rVert = \lVert \Phi_1(f) \rVert \geq \lVert f \restriction_{K_1} \rVert \]
    and
    \[ \lVert \Phi(f) \rVert \geq \lVert \Phi(f) \restriction_{[2,3]} \rVert = \lVert \Phi_2(f) \rVert \geq \lVert f \restriction_{K_2} \rVert .\]
    Thus, $\lVert \Phi(f) \rVert \geq \lVert f \restriction_{K} \rVert$. To show that $\lVert \Phi (f) \rVert \leq \lVert f \rVert$, first note that $\lVert \Phi(f) \restriction_{[-1,0]} \rVert = \lVert \Phi_1(f) \rVert \leq \lVert f \rVert$ and $\lVert \Phi(f) \restriction_{[2,3]} \rVert = \lVert \Phi_2(f) \rVert \leq \lVert f \rVert$. Also note that for any $n \in \N$ and $t \in (2^{-n},2^{1-n}]$,
    \begin{align*}
        |\Phi_0(f)(t)| &= \big| t\big( 1-\tfrac{1}{2}\delta \big)f(p)+(1-t)\Phi_1(f)(0) \big|\\
        &\leq t\big( 1-\tfrac{1}{2}\delta \big) \lVert f \rVert + (1-t)\lVert f \rVert = \big( 1-\tfrac12 \delta t \big)\lVert f \rVert \\
        &\leq \big( 1-\tfrac12 \delta \cdot 2^{-n} \big)\lVert f \rVert = (1-2^{-n-1} \delta) \lVert f \rVert ,
    \end{align*}
    and thus
    \[ |\Phi(f)(t)| \leq |\Phi_0(f)(t)|+|R(f)(t)| \leq (1-2^{-n-1} \delta) \lVert f \rVert+2^{-n-1} \delta \lVert f \rVert = \lVert f \rVert .\]
    Similarly one can show that $|\Phi(f)(t)| \leq \lVert f \rVert$ when $t \in (2-2^{1-n},2-2^{-n}]$ for some $n \in \N$.

    With $f \in C(X)$ fixed, denote $\mu(L):=\min \{ f(x); \, x \in L \}$, $M(L):=\max \{ f(x); \, x \in L \}$ for any nonempty compact set $L \subseteq K$. Let us prove that $\mu(K)-\delta \lVert f \rVert \leq \Phi(f)(t) \leq M(K)+\delta \lVert f \rVert$ for every $t \in [-1,3]$. When $t \in [-1,0] \cup [2,3]$, then $\Phi(f)(t)=\Phi_j(f)(t)$ for some $j \in \{ 1,2 \}$, and thus
    \[ \mu(K)-\delta \lVert f \rVert \leq \mu(K_j)-\delta_j \lVert f \rVert \leq \Phi(f)(t) \leq M(K_j)+\delta_j \lVert f \rVert \leq M(K)+\delta \lVert f \rVert . \]
    Let $t \in (0,2)$. Since $\Phi_0(f)\restriction_{[0,1]}$, $\Phi_0(f)\restriction_{[1,2]}$ are affine and
    \begin{align*}
        \Phi_0(f)(0)&=\Phi_1(f)(0)\leq M(K_1)+\delta_1 \lVert f \rVert \leq M(K)+\tfrac12 \delta \lVert f \rVert ,\\
        \Phi_0(f)(1)&=\big( 1-\tfrac{1}{2}\delta \big) f(p) = f(p)-\tfrac{1}{2}\delta f(p) \leq M(K)+\tfrac12 \delta \lVert f \rVert ,\\
        \Phi_0(f)(2)&=\Phi_2(f)(2)\leq M(K_2)+\delta_2 \lVert f \rVert \leq M(K)+\tfrac12 \delta \lVert f \rVert ,
    \end{align*}
    we have $\Phi_0(f)(t) \leq M(K)+\frac12 \delta \lVert f \rVert$, and hence
    \[ \Phi(f)(t)=\Phi_0(f)(t)+R(f)(t) \leq M(K)+\tfrac12 \delta \lVert f \rVert+ \tfrac14 \delta \lVert f \rVert \leq M(K)+\delta \lVert f \rVert . \]
    Similarly one can show that $\Phi(f)(t) \geq \mu(K)-\delta \lVert f \rVert$.

    Assuming $f$ is not the constant zero function, it remains to show that $\Phi(f)$ is Besicovitch. Since $\Phi_0(f) \restriction_{[0,1]}$, $\Phi_0(f) \restriction_{[1,2]}$, $R(f) \restriction_{[-1,0]}$, $R(f) \restriction_{[2,3]}$ are affine and $\Phi_0(f) \restriction_{[-1,0]}$, $\Phi_0(f) \restriction_{[2,3]}$ are Besicovitch, it suffices to show that $R(f) \restriction_{[0,2]}$ is Besicovitch. However, thanks to $T(f)$ being Besicovitch, it actually only remains to show that the right derivative of $R(f)$ at $0$ and the left derivative of $R(f)$ at $2$ do not exist. That can be easily achieved by mimicking the corresponding part of the proof of Proposition \ref{Prop1}.
\end{proof}

\begin{proposition}
    For any $\alpha \in \omega_1$, the statement $S(\alpha,1)$ holds.
\end{proposition}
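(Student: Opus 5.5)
We argue by transfinite induction on $\alpha$. The case $\alpha=0$ is Proposition \ref{Prop1}. Let $\alpha\geq 1$ and assume $S(\beta,k)$ holds for all $\beta<\alpha$ and all $k$; by Proposition \ref{Prop2} it suffices to assume $S(\beta,1)$ for $\beta<\alpha$. Let $X,K,a,b,\delta$ be given, where $K$ is homeomorphic to $\theta(\alpha,1)=\omega^\alpha+1$. We may assume $[a,b]=[0,1]$ and $\delta<1$. Let $p\in K$ be the point corresponding to $\omega^\alpha$. Then $K\setminus\{p\}$ is a countable union of clopen pieces accumulating only at $p$. Concretely, we choose pairwise disjoint nonempty clopen sets $K_n\subseteq K\setminus\{p\}$, $n\in\N$, with $K\setminus\{p\}=\bigcup_n K_n$ and $K_n\to p$ (every neighbourhood of $p$ contains all but finitely many $K_n$). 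Each $K_n$ is a compact subset of $\omega^\alpha+1$ not containing $\omega^\alpha$, so by the preliminaries $\textup{CB}(K_n)<\alpha$. Hence $K_n$ is homeomorphic to some $\theta(\beta_n,k_n)$ with $\beta_n<\alpha$, and $S(\beta_n,k_n)$ holds by the induction hypothesis.

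Next we place the pieces on dyadic intervals accumulating at $0$, where the value $f(p)$ sits. For each $n$, let $I_n=[2^{-2n},2^{1-2n}]$, say, so that consecutive $I_n$ are separated by gaps $J_n$. Apply $S(\beta_n,k_n)$ with $X$, $K_n$, the interval $I_n$ and a small $\delta_n$ (for instance $\delta_n=\delta/2$) to get $\Phi_n$. Define $\Phi_0(f)$ by the following rules: $\Phi_0(f)=\Phi_n(f)$ on $I_n$, $\Phi_0(f)$ is affine on each gap and on the final interval up to $1$, and $\Phi_0(f)(0)=f(p)$. On top of this we add a perturbation $R$ built from $T$ of Lemma \ref{lemma}. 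The perturbation lives on dyadic subintervals of each gap, with amplitude small relative to the distance to $0$, exactly as in Propositions \ref{Prop1} and \ref{Prop2}. In addition, $R$ is scaled so that the affine-plus-bump estimate keeps $|\Phi(f)|\leq\lVert f\rVert$; to leave room for the bump, we make the affine pieces target values like $(1-\tfrac12\delta)$ times the neighbouring endpoint values.

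Then we verify the required properties.
\begin{enumerate}
\item Continuity at $0$. This is the main obstacle. We need $\Phi_n(f)$ to converge uniformly to $f(p)$ as $n\to\infty$, but property (3) of $S$ only confines $\Phi_n(f)$ to $[\min_{K_n}f-\delta_n\lVert f\rVert,\max_{K_n}f+\delta_n\lVert f\rVert]$. Since $K_n\to p$, the oscillation term tends to $0$ by continuity of $f$. To kill the $\delta_n\lVert f\rVert$ error as well, we take $\delta_n\to0$, e.g.\ $\delta_n=2^{-n}\delta$; this is precisely why property (3) is in the statement. The affine pieces on the gaps then also converge to $f(p)$, and $R(f)$ is $O(2^{-n})$ there.
\item Property (2). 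The lower bound holds because $\lVert\Phi(f)\rVert\geq\sup_n\lVert f\restriction_{K_n}\rVert$ and $\Phi(f)(0)=f(p)$. The upper bound follows from property (2) for each $\Phi_n$ together with the gap estimates, copied from Proposition \ref{Prop2}.
\item Property (3). It follows piecewise, since each $K_n\subseteq K$, and the affine interpolations stay within the convex hull of the endpoint values.
\item The Besicovitch property. At interior points and at the endpoints of the $I_n$, this follows from the $\Phi_n(f)$ being Besicovitch whenever $f\neq0$ and the bumps of $R(f)$ being Besicovitch. At $0$, the bumps require a separate argument.
\end{enumerate}

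Two points need care. First, in (4) we need each $\Phi_n(f)$ to be Besicovitch for \emph{every} nonzero $f$, not just when $f\restriction_{K_n}\neq0$. This is fine, since $S$ gives it for any nonzero $f\in C(X)$. Second, the right derivative at $0$ must be excluded. Here we copy the argument of Proposition \ref{Prop1}: suppose the right derivative exists, finite or infinite. In each gap $J_n$, choose two points in the same bump where $R(f)$ differs by a fixed multiple of the amplitude, namely a zero of the bump and a point $s_n$ with $u_n(s_n)=r$, where $T(f)(r)\neq0$. Along one sequence the difference quotients of $\Phi(f)$ at $0$ then differ from those along the other by a bounded-below quantity. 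This works because the affine part is linear within the gap, which forces the slopes from $0$ to differ. Making this precise, together with the uniform convergence in (1), is where the details concentrate.
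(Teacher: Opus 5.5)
\textbf{There is a genuine gap at the right derivative of $\Phi(f)$ at $0$.} Your architecture matches the paper's: pieces $K_n$ of rank $<\alpha$ converging to $p$, $\delta_n=2^{-n}\delta$ so that property (3) gives continuity at $0$, and affine gaps plus bumps from Lemma \ref{lemma}. But the step you defer, ruling out a right derivative at $0$, does fail. In fact the construction as you describe it can produce an infinite right derivative.

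Take $f\in C(X)$ with $\lVert f\rVert=1$, $f(p)=0$ and $f\equiv 1/n$ on $K_n$. This is continuous on $K$ because the $K_n$ are clopen and converge to $p$; extend it to $X$ by Tietze.
\begin{itemize}
\item Whatever operators the induction hypothesis supplies, property (3) gives $\Phi_n(f)\geq \frac1n-2^{-n}\delta\geq\frac1{2n}$ on $I_n$.
\item Your affine pieces on $J_n$ interpolate between such values (or $(1-\frac12\delta)$ times them), and your bumps are $O(2^{-2n})$.
\item So for large $n$ you get $\Phi(f)\geq c/n$ on $[2^{-2n-1},2^{1-2n}]$, while $\Phi(f)(0)=0$.
\end{itemize}
Hence $\Phi(f)(t)/t\to+\infty$ as $t\to0^+$, the right derivative at $0$ is $+\infty$, and $\Phi(f)$ is not Besicovitch.

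Your two-sequence argument cannot detect this. Two sequences of difference quotients that both tend to $+\infty$ can differ by a bounded-below amount.

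The argument is not justified in the finite case either. On a gap the affine part is $\sigma_n t+\tau_n$, with $\tau_n\neq f(p)$ in general: it is affine, not linear through $(0,f(p))$. Its contributions to the quotients at your two points therefore differ by $(\tau_n-f(p))(1/s_n-1/z_n)$. This is of order $|\tau_n-f(p)|\,2^{2n}$, and nothing in your construction controls it. A second difference using both zeros of a single bump would cancel the affine part and exclude finite values, but it can never exclude $\pm\infty$.

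Proposition \ref{Prop1} does not transfer for the same reason. There the base function is affine on all of $[0,1]$, hence differentiable at $0$. Here it is a patchwork of the $\Phi_n(f)$, whose quotients at $0$ can be of order $2^{2n}/n$.

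\textbf{The missing idea is to pin the base function to $f(p)$ inside every gap.} The paper uses gaps $[8\cdot2^{-m},12\cdot2^{-m}]$ with nodes
\begin{itemize}
\item $\Phi_0(f)(10\cdot2^{-m})=f(p)$, and
\item $\Phi_0(f)(9\cdot2^{-m})=\Phi_0(f)(11\cdot2^{-m})=(1-2^{-m}\delta)f(p)$.
\end{itemize}
Its bumps have size $2^{-m-n}\delta$, accumulate at $8,10,12$ times $2^{-m}$, and vanish at the nodes. The dips to $(1-2^{-m}\delta)f(p)$ give exactly the room needed for $\lVert\Phi(f)\rVert\leq\lVert f\rVert$ near the pinned node. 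The derivative at $0$ is then handled in three steps:
\begin{itemize}
\item The quotient at $10\cdot2^{-m}$ is exactly $0$, so any right derivative at $0$ must be $0$; this disposes of $\pm\infty$.
\item The quotient at $9\cdot2^{-m}$ is $-\delta f(p)/9$, which settles the case $f(p)\neq0$.
\item If $f(p)=0$, then $\Phi_0(f)\equiv0$ on $[9\cdot2^{-m},10\cdot2^{-m}]$, so the bumps there give quotients of modulus at least $\frac{\delta}{20}|T(f)(r)|$.
\end{itemize}
Once such pinned nodes are inserted into your gaps, the remaining verifications go through as you indicate.
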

\begin{proof}
    By Proposition \ref{Prop1}, the statement $S(0,1)$ holds. We proceed by transfinite induction. Given $\alpha \in \omega_1 \setminus \{ 0 \}$, assume $S(\beta,1)$ holds for all $\beta < \alpha$. Then, by Proposition \ref{Prop2}, $S(\beta,k)$ holds for all $\beta < \alpha$ and $k \in \omega \setminus \{ 0 \}$. Let $X,K,a,b$ and $\delta$ satisfying the assumptions of $S(\alpha,1)$ be given. We can assume without loss of generality that $[a,b]=[0,8]$ and that $\delta<1$.
    \begin{claim}
        There exists a point $p \in K$ and a sequence $(K_m)_{m=1}^\infty$ of nonempty compact subsets of $K$ such that
        \begin{enumerate}[label=\textup{(\arabic*)}]
        \item $\bigcup \{ K_m ; \, m \in \N \}=K \setminus \{ p \}$;
        \item $\textup{CB}(K_m)<\alpha$ for every $m \in \N$;
        \item for any neighbourhood $U$ of $p$, we have $K_m \subseteq U$ for all but finitely many $m \in \N$.
    \end{enumerate}
    \end{claim}
    \begin{claimproof}
        Since $K$ is homeomorphic to $\theta(\alpha,1)=\omega^\alpha+1$, there is a homeomorphism $\psi \colon \omega^\alpha+1 \to K$. Let $(\gamma_m)_{m=1}^\infty$ be an increasing sequence of ordinals converging to $\omega^\alpha$, with $\gamma_1=0$. For each $m \in \N$, let $L_m$ be the set of all ordinals $\gamma$ such that $\gamma_m \leq \gamma \leq \gamma_{m+1}$. Then $(L_m)_{m=1}^\infty$ is a sequence of nonempty compact subsets of $\omega^\alpha+1$ not containing the point $\omega^\alpha$. Hence, $\textup{CB}(L_m)<\alpha$ for all $m \in \N$ (recall the last paragraph of Section \ref{SectionPreliminaries}). Let $p:=\psi(\omega^\alpha)$ and $K_m := \psi (L_m)$ for $m \in \N$. Condition (1) is satisfied and, since $\bigcup \{ L_m ; \, m \in \N \} = (\omega^\alpha+1) \setminus \{ \omega^\alpha \}$, so is (2). Given a neighbourhood $U$ of $p$, the set $\psi^{-1}(U)$ is a neighbourhood of $\omega^\alpha$ in the space $\omega^\alpha+1$. Hence, as $(\gamma_m)_{m=1}^\infty$ converges to $\omega^\alpha$, we have $L_m \subseteq \psi^{-1}(U)$ for all but finitely many $m \in \N$. Condition (3) follows.
        \claimend
    \end{claimproof}
    For every $m \in \N$, let $a_m :=12 \cdot 2^{-m}$, $b_m :=16 \cdot 2^{-m}$ and $\delta_m :=2^{-m}\delta$. Using $S(\beta_m ,k_m)$ for $X$, $K_m$, $a_m$, $b_m$, $\delta_m$, we obtain an operator $\Phi_m \colon C(X) \to C([a_m,b_m])$. Let us define an operator $\Phi_0 \colon C(X) \to C([0,8])$ as follows. For any $f \in C(X)$, let $\Phi_0(f)$ be the unique function on $[0,8]$ such that for all $m \in \N$:
    \begin{enumerate}[label=\textup{(\roman*)}]
        \item $\Phi_0(f)(t)=\Phi_m(f)(t)$ for any $t \in [a_m,b_m]$;
        \item $\Phi_0(f)(10 \cdot 2^{-m})=\Phi_0(f)(0)=f(p)$;
        \item $\Phi_0(f)(9 \cdot 2^{-m})=\Phi_0(f)(11 \cdot 2^{-m})=(1-2^{-m}\delta)f(p)$;
        \item $\Phi_0(f)$ is affine on $[j \cdot 2^{-m},(j+1)2^{-m}]$ for each $j \in \{ 8,9,10,11 \}$.
    \end{enumerate}
    The explicit formula for $\Phi_0(f)(t)$, when $t \in [8 \cdot 2^{-m},12 \cdot 2^{-m}]=[b_{m+1},a_m]$, is
    \begin{equation*}
        \Phi_0(f)(t)=\begin{cases}
            (2^m t-8)(1-2^{-m}\delta)f(p)+(9-2^m t)\Phi_{m+1}(f)(b_{m+1}), &t\in [8 \cdot 2^{-m},9 \cdot 2^{-m})\\
            (1+\delta t-10 \cdot 2^{-m}\delta)f(p), &t\in [9 \cdot 2^{-m},10 \cdot 2^{-m})\\
            (1+10 \cdot 2^{-m}\delta-\delta t)f(p), &t\in [10 \cdot 2^{-m},11 \cdot 2^{-m})\\
            (2^mt-11)\Phi_m(f)(a_m)+(12-2^m t)(1-2^{-m}\delta)f(p), &t\in [11 \cdot 2^{-m},12 \cdot 2^{-m}].
        \end{cases}
    \end{equation*}

    \begin{figure}[htbp]
\centering
\begin{tikzpicture}[x=0.9cm, y=3.5cm, >=stealth] 

    \newcommand{\fp}{1.4}
    \newcommand{\fPDelta}{1.1}
    
    \draw[->, thick] (-0.5, 0) -- (16.5, 0) node[right] {$t$};
    \draw[->, thick] (0, -0.1) -- (0, 1.8) node[above] {$\Phi_0(f)(t)$};

    \node[right, font=\small\bfseries] at (6.0, 1.9) {Part of the graph of $\Phi_0(f)$};

    \draw[dashed, gray] (0, \fp) -- (16.5, \fp);
    \node[above right, black, font=\small] at (0, \fp) {$f(p)$};
    
    \draw[dashed, gray] (0, \fPDelta) -- (16.5, \fPDelta);
    \node[above right, black, font=\small] at (0, \fPDelta) {$(1-2^{-m}\delta)f(p)$};

    \node[below left, font=\small] at (0, 0) {$0$};

    \foreach \x/\label in {
        6/{$a_{m+1}$},
        8/{$b_{m+1}$}, 
        9/{$\frac{9}{2^m}$}, 
        10/{$\frac{10}{2^m}$}, 
        11/{$\frac{11}{2^m}$}, 
        12/{$a_m$},
        16/{$b_m$}
    } {
        \draw[dotted, gray] (\x, 0) -- (\x, 1.6);
        \node[below, font=\small] at (\x, 0) {\label};
    }

    \draw[thick, blue, samples=250, domain=6.0:8.0] plot (
        \x, 
        { 0.5 * (0.95 - 0.07*(\x-8.0) + 0.16*sin(90*(\x-6.0)) - 0.28*sin(180*(\x-6.0)) + 0.04*sin(1350*(\x-6.0)) + 0.025*sin(3150*(\x-6.0))) + 0.5*\fPDelta - 0.07*(\x-8.0) }
    );
    
    \node[blue, above] at (7.0, 1.5) {\small \(\Phi_{m+1}(f)\)};

    \draw[thick, red] (8.0, 1.025) -- (9.0, \fPDelta);
    \draw[thick, red] (9.0, \fPDelta) -- (10.0, \fp);
    \draw[thick, red] (10.0, \fp) -- (11.0, \fPDelta);
    \draw[thick, red] (11.0, \fPDelta) -- (12.0, 1.2); 

    \fill[red] (8.0, 1.025) circle (1.3pt);
    \fill[red] (9.0, \fPDelta) circle (1.3pt);
    \fill[red] (10.0, \fp) circle (1.3pt);
    \fill[red] (11.0, \fPDelta) circle (1.3pt);
    \fill[red] (12.0, 1.2) circle (1.3pt);

    \draw[thick, blue, samples=500, domain=12.0:16.0] plot (
        \x, 
        { 1.2 - 0.125*(\x-12) + 0.35*((\x-12.0)/4.0)^8 + 0.35*sin(45*(\x-12)) + 0.15*sin(135*(\x-12)) + 0.04*sin(900*(\x-12)) + 0.025*sin(2250*(\x-12)) 
          + 0.04 * (0.5 - abs(mod(15*(\x-12), 1) - 0.5)) - 0.04*(\x-12.0) }
    );
        
    \node[blue, above] at (14.0, 1.6) {\small \(\Phi_m(f)\)};
\end{tikzpicture}
\end{figure}
    
    \begin{claim}
        $\Phi_0(f)$ is a continuous function for any $f \in C(X)$.
    \end{claim}
    \begin{claimproof}
        Let $f \in C(X)$. It is clear that $\Phi_0(f)$ is continuous on $(0,8]$, so it suffices to show that $\Phi_0(f)$ is continuous at $0$ from the right. Let $d_m:=\max \{ |f(x)-f(p)| \, ; \, x \in K_m \}$ for each $m \in \N$. Thanks to (3), the continuity of $f$ implies that $(d_m)_{m=1}^\infty$ converges to $0$. For any $m \in \N$ and $t \in [a_m,b_m]$, the value $\Phi_0(f)(t)=\Phi_m(f)(t)$ satisfies
        \[ \min_{x \in K_m} f(x)-\delta_m \lVert f \rVert \leq \Phi_0(f)(t) \leq \max_{x \in K_m} f(x)+\delta_m \lVert f \rVert , \]
        and thus
        \[ f(p)-d_m-2^{-m}\delta \lVert f \rVert \leq \Phi_0(f)(t) \leq f(p)+d_m+2^{-m}\delta \lVert f \rVert . \]
        Consequently, denoting $\displaystyle A:= \bigcup_{m \in \N}[a_m,b_m]$, we obtain \[ \lim_{\begin{smallmatrix} t \to 0, \\ t\in A \end{smallmatrix}} \Phi_0(f)(t)=f(p). \]
        By (ii), (iii) and (iv), it follows that $\displaystyle \lim_{t \to 0^+} \Phi_0(f)(t)=f(p)=\Phi_0(f)(0)$. \claimend
    \end{claimproof}

    For all $m,n \in \N$, let
    \begin{align*}
         c_{m,n}&:=2^{-m}(8+2^{1-n}), & d_{m,n}&:=2^{-m}(10-2^{1-n}),\\
         c_{m,n}^*&:=2^{-m}(10+2^{1-n}), & d_{m,n}^*&:=2^{-m}(12-2^{1-n}).
    \end{align*}
    Note that $c_{m,1}=d_{m,1}=9 \cdot 2^{-m}$ and $c_{m,1}^*=d_{m,1}^*=11 \cdot 2^{-m}$. The sequences $(c_{m,n})_{n=1}^\infty$, $(c_{m,n}^*)_{n=1}^\infty$ are decreasing and converge to $8 \cdot 2^{-m}$ and $10 \cdot 2^{-m}$, respectively, while $(d_{m,n})_{n=1}^\infty$, $(d_{m,n}^*)_{n=1}^\infty$ are increasing and converge to $10 \cdot 2^{-m}$ and $12 \cdot 2^{-m}$, respectively.

    \begin{figure}[htbp]
    \centering
    \begin{tikzpicture}[x=7.2cm, y=1.2cm]
    \begin{scope}[shift={(0,0.9)}]
        \draw[thick] (8,0) -- (10,0);        
        \filldraw (8,0) circle (1.5pt);
        \filldraw (8.125,0) circle (1.5pt);
        \filldraw (8.25,0) circle (1.5pt);
        \filldraw (8.5,0) circle (1.5pt);
        \filldraw (9,0) circle (1.5pt);
        \filldraw (9.5,0) circle (1.5pt);
        \filldraw (9.75,0) circle (1.5pt);
        \filldraw (9.875,0) circle (1.5pt);
        \filldraw (10,0) circle (1.5pt);
        \node[above=5pt] at (8,0) {$8 \cdot 2^{-m}$};
        \node[above=5pt] at (9,0) {$9 \cdot 2^{-m}$};
        \node[above=5pt] at (10,0) {$10 \cdot 2^{-m}$};
        
        \begin{scope}[every node/.style={below, inner sep=1pt}]
            \node[yshift=-5pt] at (8.125,0) {$c_{m,4}$};
            \node[yshift=-5pt] at (8.25,0) {$c_{m,3}$};
            \node[yshift=-5pt] at (8.5,0) {$c_{m,2}$};
            \node[yshift=-5pt] at (9,0) {$c_{m,1} = d_{m,1}$};
            \node[yshift=-5pt] at (9.5,0) {$d_{m,2}$};
            \node[yshift=-5pt] at (9.75,0) {$d_{m,3}$};
            \node[yshift=-5pt] at (9.875,0) {$d_{m,4}$};
        \end{scope}
    \end{scope}

    \begin{scope}[shift={(0,-0.9)}]
        \draw[thick] (8,0) -- (10,0);        
        \filldraw (8,0) circle (1.5pt);
        \filldraw (8.125,0) circle (1.5pt);
        \filldraw (8.25,0) circle (1.5pt);
        \filldraw (8.5,0) circle (1.5pt);
        \filldraw (9,0) circle (1.5pt);
        \filldraw (9.5,0) circle (1.5pt);
        \filldraw (9.75,0) circle (1.5pt);
        \filldraw (9.875,0) circle (1.5pt);
        \filldraw (10,0) circle (1.5pt);
        \node[above=5pt] at (8,0) {$10 \cdot 2^{-m}$};
        \node[above=5pt] at (9,0) {$11 \cdot 2^{-m}$};
        \node[above=5pt] at (10,0) {$12 \cdot 2^{-m}$};
        
        \begin{scope}[every node/.style={below, inner sep=1pt}]
            \node[yshift=-5pt] at (8.125,0) {$c_{m,4}^*$};
            \node[yshift=-5pt] at (8.25,0) {$c_{m,3}^*$};
            \node[yshift=-5pt] at (8.5,0) {$c_{m,2}^*$};
            \node[yshift=-5pt] at (9,0) {$c_{m,1}^* = d_{m,1}^*$};
            \node[yshift=-5pt] at (9.5,0) {$d_{m,2}^*$};
            \node[yshift=-5pt] at (9.75,0) {$d_{m,3}^*$};
            \node[yshift=-5pt] at (9.875,0) {$d_{m,4}^*$};
        \end{scope}
    \end{scope}
    \end{tikzpicture}
    \end{figure}
    
    For all $m,n \in \N$, let $u_{m,n}$ and $v_{m,n}$ be the increasing affine functions mapping $[c_{m,n+1},c_{m,n}]$ and $[d_{m,n},d_{m,n+1}]$ onto $[0,1]$, respectively. Similarly, let their starred counterparts $u_{m,n}^*$ and $v_{m,n}^*$ be the increasing affine functions mapping $[c_{m,n+1}^*,c_{m,n}^*]$ and $[d_{m,n}^*,d_{m,n+1}^*]$ onto $[0,1]$.
    
    Let $T \colon C(X) \to C([0,1])$ be the operator from Lemma \ref{lemma}. Define $R \colon C(X) \to C([0,8])$ by
    \begin{equation*}
        R(f)(t)=\begin{cases}
            0 &\textup{if } t=0,\\
            0 &\textup{if } t \in[a_m,b_m] \cup \{ 10 \cdot 2^{-m} \} \textup{ for some } m \in \N,\\
            2^{-m-n}\delta \, T(f) (u_{m,n}(t)) &\textup{if } t\in (c_{m,n+1},c_{m,n}] \textup{ for } m,n \in \N,\\
            2^{-m-n}\delta \, T(f) (v_{m,n}(t)) &\textup{if } t\in (d_{m,n},d_{m,n+1}] \textup{ for } m,n \in \N,\\
            2^{-m-n}\delta \, T(f) (u_{m,n}^*(t)) &\textup{if } t\in (c_{m,n+1}^*,c_{m,n}^*] \textup{ for } m,n \in \N,\\
            2^{-m-n}\delta \, T(f) (v_{m,n}^*(t)) &\textup{if } t\in (d_{m,n}^*,d_{m,n+1}^*] \textup{ for } m,n \in \N .
        \end{cases}
    \end{equation*}
    It is easy to see that $R(f)$ is a continuous function for any $f \in C(X)$, and that $\Phi_0$ and $R$ are linear maps. Let $\Phi:=\Phi_0+R$.
    \begin{claim}
        For any $f \in C(X)$, we have $\lVert f\restriction_K \rVert \leq \lVert \Phi(f) \rVert \leq \lVert f \rVert$.
    \end{claim}
    \begin{claimproof}
        Let $f \in C(X)$. For any $m \in \N$,
        \[ \lVert \Phi(f) \rVert \geq \lVert \Phi(f)\restriction_{[a_m,b_m]} \rVert = \lVert \Phi_m(f) \rVert \geq \lVert f\restriction_{K_m} \rVert , \]
        hence $\lVert f\restriction_K \rVert \leq \lVert \Phi(f) \rVert$. On the other hand, for any $m \in \N$ and $t \in [a_m,b_m]$,
        \[ |\Phi(f)(t)|=|\Phi_m(f)(t)| \leq \lVert \Phi_m(f) \rVert \leq \lVert f \rVert . \]
        When $t \in (c_{m,n+1},c_{m,n}]$ for some $m,n \in \N$, then
    \begin{align*}
        |\Phi(f)(t)| &\leq |\Phi_0(f)(t)|+|R(f)(t)|\\
        &\leq (2^m t-8)(1-2^{-m}\delta)\lVert f \rVert+(9-2^m t)\lVert f \rVert + 2^{-m-n}\delta \lVert f \rVert\\
        &= (2^m t-\delta t-8+8 \cdot 2^{-m}\delta+9-2^m t+ 2^{-m-n}\delta )\lVert f \rVert\\
        &= \lVert f \rVert+(8 \cdot 2^{-m}-t+ 2^{-m-n})\delta \lVert f \rVert \\
        &\leq \lVert f \rVert+(8 \cdot 2^{-m}-c_{m,n+1}+ 2^{-m-n})\delta \lVert f \rVert = \lVert f \rVert .
    \end{align*}
    This shows that $|\Phi(f)(t)| \leq \lVert f \rVert$ when $t \in (8 \cdot 2^{-m},9 \cdot 2^{-m}]$ for some $m \in \N$. Similarly one can deduce the same inequality when $t$ belongs to any of the intervals $(j \cdot 2^{-m},(j+1) 2^{-m})$, $j \in \{ 9,10,11 \}$. \claimend
    \end{claimproof}

    \begin{claim}
        For any $f \in C(X)$ and $t \in [0,8]$,
        \[ \min_{x \in K}f(x)-\delta \lVert f \rVert \leq \Phi(f)(t) \leq \max_{x \in K}f(x)+\delta \lVert f \rVert . \]
    \end{claim}
    \begin{claimproof}
        Let $f \in C(X)$ and denote $\mu (F):=\min \{ f(x); \, x \in F \}$, $M(F):=\max \{ f(x); \, x \in F \}$ for any nonempty compact set $F \subseteq K$. For any $m \in \N$ and $t \in [a_m,b_m]$, we have $\Phi(f)(t)=\Phi_m(f)(t)$, and thus
        \[ \mu(K)-\delta \lVert f \rVert \leq \mu(K_m)-\delta_m \lVert f \rVert \leq \Phi(f)(t) \leq M(K_m)+\delta_m \lVert f \rVert \leq M(K)+\delta \lVert f \rVert . \]
        Moreover, since
        \begin{align*}
            \Phi_0(f)(8 \cdot 2^{-m})&=\Phi_{m+1}(f)(b_{m+1})\leq M(K_{m+1})+\delta_{m+1}\lVert f \rVert \leq M(K)+\tfrac12 \delta \lVert f \rVert ,\\
            \Phi_0(f)(10 \cdot 2^{-m})&=f(p) \leq M(K),\\
            \Phi_0(f)(12 \cdot 2^{-m})&=\Phi_m(f)(a_m)\leq M(K_m)+\delta_m\lVert f \rVert \leq M(K)+\tfrac12 \delta \lVert f \rVert ,\\
            \Phi_0(f)(9 \cdot 2^{-m}) &= \Phi_0(f)(11 \cdot 2^{-m}) = (1-2^{-m}\delta)f(p) \leq (1-2^{-m}\delta) M(K) \leq M(K)+ \tfrac12 \delta \lVert f \rVert ,
        \end{align*}
        it follows from (iv) that for all $t \in [8 \cdot 2^{-m},12 \cdot 2^{-m}]$,
        \[ \Phi(f)(t)=\Phi_0(f)(t)+R(f)(t) \leq M(K)+ \tfrac12 \delta \lVert f \rVert+ 2^{-m} \delta \lVert f \rVert \leq M(K)+\delta \lVert f \rVert . \]
        Similarly it can be shown that $\Phi(f)(t) \geq \mu(K)-\delta \lVert f \rVert$ whenever $t \in [8 \cdot 2^{-m},12 \cdot 2^{-m}]$ for some $m \in \N$. \claimend
    \end{claimproof}
    Given nonzero $f \in C(X)$, let us show that $\Phi(f)$ is Besicovitch. For $t \in (0,8)$ which is not equal to $a_m$, $b_m$ or $10 \cdot 2^{-m}$ for any $m \in \N$, it is clear that $\Phi(f)$ does not have a one-sided derivative at $t$. Moreover, for any $m \in \N$, neither the left derivative of $\Phi(f)$ at $b_m$ nor the right derivative at $a_m$ exists. The reasoning shown in the last paragraph of the proof of Proposition \ref{Prop1} can easily be adjusted to work here as well. We conclude that for any $m \in \N$, $\Phi(f)$ does not have a one-sided derivative at any of the points $a_m$, $b_m$ and $10 \cdot 2^{-m}$. It remains to show that the right derivative of $\Phi(f)$ at $0$ does not exist. Since for each $m \in \N$
    \[ \Phi(f)(10 \cdot 2^{-m})=\Phi_0(f)(10 \cdot 2^{-m})=f(p)=\Phi(f)(0), \]
    we deduce that if the right derivative of $\Phi(f)$ at $0$ existed, it would be equal to $0$. On the other hand, for any $m \in \N$,
    \begin{align*}
        \frac{\Phi(f)(9 \cdot 2^{-m})-\Phi(f)(0)}{9 \cdot 2^{-m}-0}&= \frac{\Phi_0(f)(9 \cdot 2^{-m})+R(f)(c_{m,1})-f(p)}{9 \cdot 2^{-m}}\\
        &=\frac{(1-2^{-m}\delta)f(p)+0-f(p)}{9 \cdot 2^{-m}}\\
        &=\frac{-2^{-m}\delta f(p)}{9 \cdot 2^{-m}}=-\frac{\delta f(p)}{9}.
    \end{align*}
    This shows that if $f(p)\neq 0$, then the right derivative of $\Phi(f)$ at $0$ does not exist. Assume $f(p)=0$. Then for any $m \in \N$ and $t \in [9 \cdot 2^{-m},10 \cdot 2^{-m}]$, we have $\Phi_0(f)(t)=0$, and hence $\Phi(f)(t)=R(f)(t)$. Since $T(f)$ is a Besicovitch function on $[0,1]$, there is $r \in (0,1)$ such that $T(f)(r)\neq 0$. For each $m \in \N$, let $t_m \in (d_{m,1},d_{m,2})$ be the point with $v_{m,1}(t_m)=r$. Then
    \begin{align*}
        \bigg| \frac{\Phi(f)(t_m)-\Phi(f)(0)}{t_m-0} \bigg|&=\frac{|R(f)(t_m)-f(p)|}{t_m}=\frac{|2^{-m-1}\delta \, T(f)(r)-0|}{t_m}\\
        &=\frac{2^{-m-1}\delta \, |T(f)(r)|}{t_m} \geq \frac{2^{-m-1}\delta \, |T(f)(r)|}{10 \cdot 2^{-m}}=\frac{\delta}{20}|T(f)(r)|
    \end{align*}
    for any $m \in \N$, which shows that the right derivative of $\Phi(f)$ at $0$ does not exist.
\end{proof}

\begin{corollary}
    For any $\alpha \in \omega_1$ and $k \in \omega \setminus \{ 0 \}$, the statement $S(\alpha ,k)$ holds.
\end{corollary}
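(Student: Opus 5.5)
The corollary follows by combining the last two propositions. Fix $\alpha \in \omega_1$ and $k \in \omega \setminus \{0\}$.

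By the preceding proposition, which was proved by transfinite induction on $\alpha$ with Proposition \ref{Prop1} as the base case, the statement $S(\alpha,1)$ holds. Feeding this into Proposition \ref{Prop2} then gives $S(\alpha,k)$ for every $k \in \omega\setminus\{0\}$. Since $\alpha$ and $k$ were arbitrary, this proves the corollary.

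There is no substantive obstacle at this stage; all the analytic work has already been done. One point should be checked. The inductive step of the preceding proposition invokes $S(\beta_m,k_m)$ for compact sets $K_m$ with $\textup{CB}(K_m)=\beta_m<\alpha$, where $k_m$ is the cardinality of $K_m^{(\beta_m)}$. This relies on the Mazurkiewicz--Sierpi\'nski classification, which says each such $K_m$ is homeomorphic to $\theta(\beta_m,k_m)$, and on Proposition \ref{Prop2} applied at each $\beta<\alpha$. Both are already built into that proof, so the corollary is just the conjunction of the two propositions.

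Looking ahead, Theorem \ref{MainResult} is obtained from the corollary as follows. Let $X$ be a nonempty countable compact space; if $X$ is empty the statement is trivial. Take $K = X$, which is homeomorphic to $\theta(\textup{CB}(X),|X^{(\textup{CB}(X))}|)$, and apply $S$ with $[a,b]=[0,1]$. By property (2), $\lVert f \rVert \le \lVert \Phi(f)\rVert \le \lVert f\rVert$, so $\Phi$ is a linear isometry. By property (1), every nonzero element of $Y=\Phi(C(X))$ is Besicovitch.
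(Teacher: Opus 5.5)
Your proposal is correct and matches the paper. There the corollary is stated without proof, as the immediate combination of the transfinite-induction proposition (giving $S(\alpha,1)$ for every $\alpha\in\omega_1$) with Proposition~\ref{Prop2}, and your remark that the Mazurkiewicz--Sierpi\'nski classification is what identifies $\beta_m$ and $k_m$ for the sets $K_m$ spells out a step the paper leaves implicit.
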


\begin{proof}[\textbf{Proof of Theorem \ref{MainResult}.}]
    Let $X$ be a countable compact space. The case when $X=\emptyset$ is trivial, so assume $X \neq \emptyset$. Then $X$ is homeomorphic to $\theta (\alpha,k)$ for some $\alpha \in \omega_1$ and $k \in \omega \setminus \{ 0 \}$. By $S(\alpha,k)$ used for $K=X$, $[a,b]=[0,1]$ and arbitrary $\delta >0$, there exists a bounded linear operator $\Phi \colon C(X) \to C([0,1])$ such that
    \begin{enumerate}[label=\textup{(\arabic*)}]
        \item $\Phi(f)$ is a Besicovitch function for any nonzero $f \in C(X)$;
        \item $\lVert f\restriction_K \rVert \leq \lVert \Phi(f) \rVert \leq \lVert f \rVert$ for any $f \in C(X)$.
    \end{enumerate}
    Since $K=X$, (2) shows that $\Phi$ is an isometric embedding of $C(X)$ into $C([0,1])$. Letting $Y$ be the image of $\Phi$, we obtain the desired subspace.
\end{proof}

\bibliographystyle{alpha}
\bibliography{citace}

\end{document}